\let\oldsection\section
\renewcommand\section{\setcounter{equation}{0}\oldsection}
\newtheorem{theorem}{Theorem}[section]
\newtheorem{lemma}{Lemma}[section]
\newtheorem{definition}{Definition}[section]
\newtheorem{remark}{Remark}[section]
\begin{document}

\title[Global Existence of Strong Solutions to Incompressible MHD]{Global Existence of Strong Solutions to Incompressible MHD}

\author{Huajun Gong}
\address[Huajun Gong]{The Institute of Mathematical Sciences
University of Science and Technology of China, Anhui, 230026, P.R.China}
\email{huajun84@ustc.edu.cn}

\author{Jinkai~Li}
\address[Jinkai~Li]{
The Institute of Mathematical Sciences, The Chinese University of Hong Kong , Hong Kong. Also Department of Computer Science and Applied Mathematics, Weizmann Institute of Science, Rehovot 76100, Israel}
\email{jklimath@gmail.com}

\begin{abstract}
We establish the global existence and uniqueness of strong solutions to the initial boundary value problem for the incompressible MHD equations in bounded smooth domains of $\mathbb R^3$ under some suitable smallness conditions. The initial density is allowed to have vacuum, in particular, it can vanish in a set of positive Lebessgue measure. More precisely, under the assumption that the production of the quantities $\|\sqrt\rho_0u_0\|_{L^2(\Omega)}^2+\|H_0\|_{L^2(\Omega)}^2$ and $\|\nabla u_0\|_{L^2(\Omega)}^2+\|\nabla H_0\|_{L^2(\Omega)}^2$ is suitably small, with the smallness depending only on the bound of the initial density and the domain, we prove that there is a unique strong solution to the Dirichlet problem of the incompressible MHD system.
\end{abstract}

\keywords{Incompressible MHD; global existence and uniqueness; strong solutions.}
\maketitle

\section{Introduction}\label{sec1}

Magnetohydrodynamics (MHD for short) is the study of the interaction between magnetic fields and moving conducting fluids, which can be described by the following system
\begin{align}
&\rho_t+\textmd{div}(\rho u)=0,\label{1.1}\\
&\rho(u_t+(u\cdot\nabla) u)-\mu\Delta u+\nabla p=(\nabla\times H)\times H,\label{1.2}\\
&H_t-\lambda\Delta H=\nabla\times(u\times H),\label{1.3}\\
&\textmd{div}u=\textmd{div}H=0,\label{1.4}
\end{align}
where $\rho=\rho(x, t)\in\mathbb R^+$ denotes the density, $u=u(x, t)\in\mathbb R^3$ the fluid velocity, $H=H(x,t)\in\mathbb R^3$ the magnetic field, $p=p(x,t)\in\mathbb R$ the pressure, positive constant $\mu$ is called the kinematic viscosity and positive constant $\lambda$ is called the magnetic diffusivity. Usually, we refer to equation (\ref{1.1}) as the continuity equation, which represents the conservation lass of the mass, to (\ref{1.2}) as the momentum conservation equation. Equation (\ref{1.3}) is a considerably simplified version of the well known Maxiwell's equations (see \cite{Gerbeau,KULY,Landau}) by dropping Gauss's law and ignoring the displacement currents, and it's sometimes called the induction equation. As for the constraint $\textmd{div}H = 0$, it can be seen just as a restriction on the initial value $H_0$ of $H$ since $\textmd{div}H_t = 0.$ Note that, using the condition (\ref{1.4}), equations (\ref{1.2}) and (\ref{1.3}) can be rewritten as
\begin{eqnarray*}
&&\rho(u_t+(u\cdot\nabla) u)-\mu\Delta u+\nabla p=(H\cdot\nabla)H,\\
&&H_t+(u\cdot\nabla)H-\lambda\Delta H=(H\cdot\nabla)u.
\end{eqnarray*}

There are a lot of literatures on the study of MHD. Global existence of weak solutions to the homogeneous incompressible MHD were proven in \cite{Temam} and \cite{JLLions} long time ago, the density dependent case was later proven in \cite{Gerbeau} by using Lions's method \cite{Lions}. The corresponding results on the global existence of weak solutions to the compressible MHD are proven in \cite{HuWang1,HuWang2,Feireisl3,FanYu} by using the method exploited in \cite{Lions2} (see also \cite{Feireisl1,Feireisl2}), where in \cite{HuWang1} the isentropic case is considered, while in \cite{HuWang2,Feireisl3,FanYu} the non-isothermal model is considered. Local existence and uniqueness of strong solutions can be found in \cite{Tan} and \cite{WU} for the incompressible model. In \cite{Fan} and \cite{LXL1} for the compressible model, where in the last three papers the non-isothermal model are considered.
Global existence and uniqueness of strong solutions to the incompressible MHD with vacuum for two dimensional case are proved in \cite{WYH}, recently, and this work is new even for the inhomogeneous incompressible Navier-Stokes equations, however the three dimensional case is proven in \cite{LXL2} with small initial data and away from vacuum. Later the regularity criteria for the solution to the 3D MHD are proved in \cite{YZ1,YZ2,YZ3}.

One of the most important question for system (\ref{1.1})--(\ref{1.4}) is to prove the global existence and uniqueness of solutions $(\rho,u,{H})$ satisfying the initial condition
\begin{equation}
(\rho,u,H)|_{t=0}=(\rho_0,u_0,H_0)\quad\mbox{in }\Omega,\label{1.5}
\end{equation}
and boundary conditions
\begin{equation}
u|_{\partial\Omega}=0,\quad H\cdot n|_{\partial\Omega}=0,\quad(\nabla\times H)\times n|_{\partial\Omega}=0,\label{1.6}
\end{equation}
where $n$ is the unit outward norm vector on $\partial\Omega$.

The aim of this paper is to prove the global existence and uniqueness of strong solutions to system (\ref{1.1})--(\ref{1.6}) with the initial data being allowed to have vacuum. For $1\leq p\leq\infty$, we denote by $\|u\|_p$ the $L^p(\Omega)$ norm of the function $u$. The definition of strong solution is stated in the following:

\begin{definition}
We call $(\rho, u, H)$ a strong solution to the system (\ref{1.1})--(\ref{1.6}) on $(0, T)$ if $(\rho, u, H)$ satisfies (\ref{1.1})--(\ref{1.4}) a.e. in $\Omega\times(0, T)$ for some pressure function $p$, and satisfies the initial condition (\ref{1.5}) and boundary condition (\ref{1.6}), with the regularity
\begin{align*}
&\rho\in L^\infty(Q_T)\cap L^\infty(0, T; H^1(\Omega)),\quad\rho_t\in L^\infty(0, T; L^2(\Omega)),\\
&u\in L^\infty(0, T; H_0^1(\Omega)\cap H^2(\Omega))\cap L^2(0, T; W^{2,6}(\Omega)),\\
&H\in L^\infty(0, T; H^2(\Omega))\cap L^2(0, T; W^{2, 6}(\Omega)),\\
&u_t, H_t\in L^2(0, T; H^1(\Omega)),\quad\sqrt\rho u_t, H_t\in L^\infty(0, T; L^2(\Omega)).
\end{align*}
\end{definition}

Our main result is stated bellow:
\begin{theorem}\label{theorem1.1} Let $\Omega$ be a bounded smooth domain in $\mathbb{R}^3$ and $\bar\rho$ a positive number. Assume that the initial data $(\rho_0,u_0)$ satisfies the conditions
\begin{equation*}
0\leq\rho_0\leq\bar\rho, \quad\rho_0\in H^1(\Omega),\quad u_0\in H_0^1(\Omega)\cap H^2(\Omega),\quad H_0\in H^2(\Omega)
\end{equation*}
and the compatibility condition
\begin{align*}
&\textmd{div}u_0=\textmd{div}H_0=0,\quad H_0\cdot n|_{\partial\Omega}=0,\quad(\nabla\times H_0)\times n|_{\partial\Omega}=0,\\
&\Delta u_0+(H_0\cdot\nabla)H_0-\nabla p_0=\sqrt{\rho_0}g_0\quad\mbox{in }\Omega
\end{align*}
for some $(p_0, g_0)\in H^1(\Omega)\times L^2(\Omega)$.

Then there is a positive constant $\varepsilon_0$ depending only on $\bar\rho$ and $\Omega$, such that, if
\begin{equation}\label{1.8}
(\|\sqrt{\rho_0}u_0\|_2^2+\|H_0\|_2^2)(\|\nabla u_0\|_2^2+\|\nabla H_0\|_2^2)\leq\varepsilon_0,
\end{equation}
then the system (\ref{1.1})--(\ref{1.6}) has a unique global strong solution.
\end{theorem}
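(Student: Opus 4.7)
The plan is to combine a local existence result of Tan–Wang type (cited in the introduction) with time-global a priori estimates that close under the smallness hypothesis \eqref{1.8}, and then extend globally by the standard continuation argument. Throughout I would track the two quantities $\varphi(t):=\|\sqrt\rho u\|_2^2+\|H\|_2^2$ and $\psi(t):=\|\nabla u\|_2^2+\|\nabla H\|_2^2$, whose product $\varphi\psi$ is precisely what appears in the smallness hypothesis. Continuation reduces the theorem to showing that $\varphi(t)\psi(t)\leq 2\varepsilon_0$ propagates on any interval on which the local solution exists.

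The first step is the basic energy balance: testing \eqref{1.2} with $u$ and \eqref{1.3} with $H$, the cross terms $\int\bigl((\nabla\times H)\times H\bigr)\cdot u$ and $\int\bigl(\nabla\times(u\times H)\bigr)\cdot H$ cancel after integration by parts, using \eqref{1.4} and the boundary conditions \eqref{1.6}. This yields $\varphi'+2\mu\|\nabla u\|_2^2+2\lambda\|\nabla H\|_2^2=0$, so $\varphi(t)\leq\varphi(0)$ and $\int_0^T\psi\,dt$ is controlled by $\varphi(0)$. For the higher-order estimate I would test \eqref{1.2} with $u_t$ and \eqref{1.3} with $H_t$, and after Gagliardo–Nirenberg together with $L^p$ elliptic theory for the Stokes system and for the magnetic boundary value problem I expect an inequality of the schematic form
$$\frac{d}{dt}\bigl(\mu\|\nabla u\|_2^2+\lambda\|\nabla H\|_2^2\bigr)+\|\sqrt\rho u_t\|_2^2+\|H_t\|_2^2\leq C(\bar\rho,\Omega)\,(\varphi^{1/2}+\psi)\,\psi^2,$$
i.e.\ the nonlinearities are essentially cubic in $(\varphi^{1/2},\psi)$.

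A bootstrap now closes the argument: under the provisional assumption $\varphi(t)\psi(t)\leq 2\varepsilon_0$, one factor of $\psi$ in the nonlinear term is absorbed by the time-integrated $\varphi$-dissipation, while the remaining coefficient is $O(\varepsilon_0^{1/2})$, giving $\psi'\leq C\varepsilon_0^{1/2}\psi$ up to absorbable terms and hence a uniform bound $\psi(t)\lesssim\psi(0)$. Combined with $\varphi(t)\leq\varphi(0)$ this yields $\varphi(t)\psi(t)\leq\varepsilon_0+O(\varepsilon_0^{3/2})\leq 2\varepsilon_0$, closing the bootstrap for $\varepsilon_0$ small. With the a priori bounds on $\varphi$ and $\psi$ in hand, the compatibility condition gives $\|\sqrt\rho u_t(\cdot,0)\|_2\leq\|g_0\|_2$, and differentiating \eqref{1.2}–\eqref{1.3} in time and testing against $u_t, H_t$ produces $\sqrt\rho u_t,\,H_t\in L^\infty_tL^2_x$ and $u_t,H_t\in L^2_tH^1_x$; elliptic estimates then promote $u,H$ to $L^\infty_tH^2\cap L^2_tW^{2,6}$, while the continuity equation \eqref{1.1} gives $\rho\in L^\infty_tH^1_x$ by transport along $u$. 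Uniqueness follows from an energy estimate on the difference of two solutions.

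The main obstacle is closing the cubic-type nonlinear estimate in three dimensions: unlike the 2D case treated in \cite{WYH}, the embedding $H^1\hookrightarrow L^6$ in $\mathbb R^3$ is only critical, so terms such as $\int\rho|u\cdot\nabla u|^2$ or $\int|H|^2|\nabla u|^2$ carry three gradient factors and cannot be absorbed by viscous dissipation alone. This is precisely why the smallness must be imposed on the \emph{product} $\varphi(0)\psi(0)$: one factor of $\psi$ is borrowed from the $\varphi$-dissipation integrated in time, and the other from the running bound on $\psi$, leaving a genuinely small constant in front of the top-order term. A secondary technical point is justifying the magnetic integrations by parts under the boundary conditions $H\cdot n=0$ and $(\nabla\times H)\times n=0$ and obtaining the corresponding elliptic regularity for the induction equation, but this is standard.
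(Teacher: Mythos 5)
Your overall strategy---local existence from the Chen--Tan--Wang theory, the basic energy identity, an $H^1$-level estimate obtained by testing \eqref{1.2} with $u_t$ and \eqref{1.3} with $H_t$ together with Stokes/elliptic $H^2$ estimates, and a continuity (bootstrap) argument exploiting smallness of the product, followed by continuation---is the same as the paper's. But the step where you close the bootstrap has a genuine gap. Your provisional assumption is smallness of the \emph{instantaneous} product $\varphi(t)\psi(t)$, and from the schematic inequality $\psi'\lesssim(\varphi^{1/2}+\psi)\psi^2$ you deduce $\psi'\le C\varepsilon_0^{1/2}\psi$. That deduction fails: $\psi^3\le C\varepsilon_0^{1/2}\psi$ would require $\psi^2\lesssim\varepsilon_0^{1/2}$, and $\varphi^{1/2}\psi^2=(\varphi\psi)^{1/2}\psi^{3/2}$ is still superlinear in $\psi$; smallness of the product gives no bound on $\psi$ itself (think of data with tiny $\varphi(0)$ and huge $\psi(0)$). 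The mechanism that actually works (and that the paper uses) is integrated in time: $\int_0^t\psi^3\,ds\le\sup\psi^2\int_0^t\psi\,ds\le C\varphi(0)\,\sup\psi\cdot\sup\psi$, and to absorb this one needs smallness of $\varphi(0)\sup_{s\le t}\psi(s)$ --- note $\varphi(0)$, not $\varphi(t)$: the basic energy gives only $\varphi(t)\le C\varphi(0)$ and no lower bound (the magnetic field diffuses and the density may vanish), so $\varphi(t)\psi(t)\le2\varepsilon_0$ does not control $\varphi(0)\sup\psi$, and your bootstrap functional is the wrong quantity to propagate. The paper runs the continuity argument on exactly $\Phi(t)=C_0^2\sup_{s\le t}(\|\nabla u\|_2^2+\|\nabla H\|_2^2)$ with $C_0^2=\|\sqrt{\rho_0}u_0\|_2^2+\|H_0\|_2^2$ (Lemma \ref{lem2.3}), which under \eqref{1.8} stays below a fixed threshold and yields $E(t)\le4(\|\nabla u_0\|_2^2+\|\nabla H_0\|_2^2)$; equivalently you could bootstrap $\psi(t)\le2\psi(0)$ directly. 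Note also that your schematic differential inequality silently drops the $\|\nabla^2u\|_2^2$, $\|\nabla^2H\|_2^2$ factors generated by the cubic nonlinearities; these must either be added to the dissipation via the elliptic estimates and absorbed using the small coefficient $\varphi^{1/2}\psi^{1/2}$ (the paper's route, \eqref{2.6}--\eqref{2.8}), or removed by Young's inequality, which is precisely what produces the $\psi^3$ term above --- either way the time-integrated absorption against $\varphi(0)\sup\psi$ is unavoidable, so the fix is to adopt the paper's bootstrap functional.

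Two smaller points. With the boundary conditions \eqref{1.6}, testing \eqref{1.3} with $H$ produces the dissipation $\|\nabla\times H\|_2^2$, not $\|\nabla H\|_2^2$; one needs the div-curl estimate of \cite{WAHL} (using $H\cdot n|_{\partial\Omega}=0$) to recover the full gradient, so your exact identity is not literally correct, though the resulting inequality (Lemma \ref{lem2.1}, with a constant $C$) is all that is used. Finally, your continuation step should be anchored to a precise blow-up criterion: the paper uses $\int_0^{T_*}\|(\nabla u,\nabla H)\|_2^8\,dt=\infty$ from Lemma \ref{lem2.0}, so the uniform $H^1$ bound alone rules out finite-time blow-up, and none of the higher-order estimates you sketch (time-differentiated equations, $L^\infty_tH^2$, $W^{2,6}$ bounds, uniqueness) need to be reproved --- they come with the local theory.
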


\begin{remark}
The quantity $(\|\sqrt{\rho}u\|_2^2+\|H\|_2^2)(\|\nabla u\|_2^2+\|\nabla H\|_2^2)$ is scaling invariable under the scaling transform
\begin{eqnarray*}
&&\rho_\lambda(x,t)=\rho(\lambda x,\lambda^2t),\quad p_\lambda(x, t)=\lambda^2p(\lambda x,\lambda^2t),\\
&&u_\lambda(x,t)=\lambda u(\lambda x,\lambda^2t),\quad H_\lambda(x,t)=\lambda H(\lambda x,\lambda^2t),
\end{eqnarray*}
and thus Theorem \ref{theorem1.1} can be viewed as a result on the global existence of strong solutions with vacuum in critical space. It seems that this is the first result in this direction, even for the Navier-Stokes equations.
\end{remark}

\section{Proof of Theorem \ref{theorem1.1}}

Throughout this section, we denote
$$
\|\sqrt{\rho_0}u_0\|_2^2+\|H_0\|_2^2=C_0^2.
$$

\begin{definition}
A finite time $T_*$ is called the finite blow-up time if
$$
\Phi(T)<\infty\quad\mbox{for all}\quad 0\leq T<T_*\quad\mbox{and}\quad\lim_{T\rightarrow T_*}\Phi(T)=\infty,
$$
where the function $\Phi(T)$ is given by
\begin{align*}
\Phi(T)=&\sup_{0\leq t\leq T}(\|(\nabla\rho, \rho_t)\|_2+\|(u, H)\|_{H^2}+\|H_t\|_2+\|\sqrt\rho u_t\|_2)\\
&+\int_0^T(\|(u, H)\|_{W^{2,6}}^2+\|(u_t, H_t)\|_{H^1}^2)dt.
\end{align*}
\end{definition}

We will use the following lemma, which states the local existence and blow up criterion of the local strong solutions.

\begin{lemma}(See \cite{Tan})\label{lem2.0}
Under the conditions of Theorem \ref{theorem1.1}  {(here we do not need the smallness condition)}, there hold

(i) (Local existence) there exists a small time $T_*$ and a unique strong solution on $(0, T_*)$,

(ii) (Blow-up criterion) $T_*$ is the finite blow-up time of $(\rho, u, H)$ if and only if
$$
\int_0^T\|(\nabla u, \nabla H)\|_2^8dt<\infty\quad\mbox{for all}\quad0<T<T_*,\quad\mbox{and}\quad\int_0^{T_*}\|(\nabla u, \nabla H)\|_2^8dt=\infty.
$$
\end{lemma}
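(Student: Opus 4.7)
The plan is to prove local existence via an approximation scheme that removes the vacuum at the initial stage, and to derive the blow-up criterion through higher-order energy estimates closed by a Gronwall argument driven by the quantity $\|(\nabla u,\nabla H)\|_2^8$.

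For the local existence part, I would first regularize the initial density by $\rho_0^\delta:=\rho_0+\delta$, so that the approximate density is strictly positive. The approximate system can then be solved by a standard iteration scheme in which, at each step, one solves a linear transport equation for $\rho$, a linear Stokes system with variable coefficients for $u$, and a linear parabolic system for $H$; a standard contraction argument on $C([0,T_\delta];H^1)\times C([0,T_\delta];H_0^1\cap H^2)\times C([0,T_\delta];H^2)$ yields a local-in-time solution. The compatibility condition $\Delta u_0+(H_0\cdot\nabla)H_0-\nabla p_0=\sqrt{\rho_0}g_0$ is used precisely to get an $L^2$-bound on $\sqrt{\rho^\delta}u_t^\delta$ at $t=0$ that is uniform in $\delta$, which in turn permits passing to the limit $\delta\to0$ to allow vacuum in the limit solution. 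Uniqueness is obtained by the usual $L^2$ energy argument applied to the difference of two solutions, using that $u\in L^\infty_tH^2$ controls all troublesome convective terms.

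For the blow-up criterion, I would argue by contradiction: assume $T_*<\infty$ and $\int_0^{T_*}\|(\nabla u,\nabla H)\|_2^8\,dt<\infty$, and aim to show $\Phi(T_*)<\infty$, contradicting maximality via the local existence part applied at a time close to $T_*$. The basic step is the energy identity obtained by testing (\ref{1.2}) with $u$ and (\ref{1.3}) with $H$, which gives $\sup_t(\|\sqrt\rho u\|_2^2+\|H\|_2^2)+\int(\|\nabla u\|_2^2+\|\nabla H\|_2^2)\,dt\leq C$. Next, testing the time-differentiated momentum equation by $u_t$ and the time-differentiated induction equation by $H_t$, and using the continuity equation to handle the $(\rho_t)u_t\cdot u_t$ term, produces an inequality of the form
\begin{equation*}
\frac{d}{dt}\bigl(\|\sqrt\rho u_t\|_2^2+\|H_t\|_2^2\bigr)+\mu\|\nabla u_t\|_2^2+\lambda\|\nabla H_t\|_2^2\leq C\bigl(\|\nabla u\|_2^4+\|\nabla H\|_2^4\bigr)\bigl(\|\sqrt\rho u_t\|_2^2+\|H_t\|_2^2\bigr)+\text{l.o.t.},
\end{equation*}
where the lower-order terms are absorbed using H\"older, Gagliardo--Nirenberg, and the Stokes/elliptic regularity bounds $\|u\|_{H^2}\leq C(\|\sqrt\rho u_t\|_2+\|\nabla H\|_2^{1/2}\|\Delta H\|_2^{1/2}+\|\nabla u\|_2^2)$ and the analogous estimate for $H$. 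Applying Gronwall with the assumed integrability $\int_0^{T_*}\|(\nabla u,\nabla H)\|_2^8\,dt<\infty$ closes the estimate on $\|\sqrt\rho u_t\|_2^2+\|H_t\|_2^2+\int(\|\nabla u_t\|_2^2+\|\nabla H_t\|_2^2)\,dt$. The $W^{2,6}$ bounds on $u$ and $H$ then follow from elliptic theory applied to the stationary Stokes and Laplace problems with right-hand sides controlled by the already-bounded quantities, and finally transporting through (\ref{1.1}) yields the bounds on $\nabla\rho$ and $\rho_t$.

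The main obstacle will be closing the second-order estimate in the presence of vacuum: one cannot divide by $\rho$, so the quantity $\|\sqrt\rho u_t\|_2$ must be the working norm, and the cross term arising when $\partial_t$ falls on the density factor has to be rewritten using (\ref{1.1}) to reveal a cancellation. The delicate book-keeping is to match the power $8$ in the blow-up criterion exactly with the product of two quartic terms $\|\nabla u\|_2^4$ obtained when one estimates the convective term $(u\cdot\nabla)u\cdot u_t$ by $\|\nabla u\|_2\|u\|_6\|\nabla u_t\|_2$ followed by Young's inequality, together with the analogous treatment of $(H\cdot\nabla)H$ and $(u\times H)$ terms in the induction equation.
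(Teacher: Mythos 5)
The paper does not prove this lemma at all: it is imported verbatim from Chen--Tan--Wang \cite{Tan}, so there is no in-paper argument to compare yours against. Your outline is, in substance, the standard route such references follow (going back to the Choe--Kim treatment of the nonhomogeneous Navier--Stokes equations with vacuum): regularize the density to $\rho_0+\delta>0$, solve the linearized problems iteratively, use the compatibility condition to obtain a $\delta$-independent bound on $\sqrt{\rho^\delta}u_t^\delta$ at $t=0$, pass to the limit $\delta\to0$, prove uniqueness by an $L^2$ energy argument, and derive the blow-up criterion by contradiction via higher-order estimates closed with Gronwall under the assumed integrability of $\|(\nabla u,\nabla H)\|_2^8$; that is sound and consistent with what the citation supplies. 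Two points in your sketch need to be made explicit for the argument to actually close. First, the Gronwall step at the $(\sqrt\rho u_t,H_t)$ level cannot be run before you have closed the intermediate estimate $\sup_t(\|\nabla u\|_2^2+\|\nabla H\|_2^2)+\int_0^t(\|\sqrt\rho u_t\|_2^2+\|H_t\|_2^2)\,ds$ under the hypothesis $\int_0^{T}\|(\nabla u,\nabla H)\|_2^8\,dt<\infty$: the ``lower-order terms'' you defer, as well as the elliptic bounds $\|u\|_{H^2}\leq C(\|\sqrt\rho u_t\|_2+\dots)$ you invoke, involve $\sup_t\|\nabla u\|_2$, so the order of the two energy levels is not optional. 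Second, $H$ does not satisfy Dirichlet conditions but $H\cdot n=0$ and $(\nabla\times H)\times n=0$ on $\partial\Omega$, so the $H^2$ and $W^{2,6}$ bounds for $H$, and the integrations by parts in the $H_t$-level estimates, are not plain Dirichlet--Laplacian regularity; they rest on div--curl estimates of Von Wahl type \cite{WAHL}, exactly as the paper itself must use in Lemma \ref{lem2.1}. With these two points spelled out, your plan is a faithful reconstruction of the cited result rather than a new route.
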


To proof the global existence of strong solutions, we need to extend the local strong solution given in the above lemma to be a global one. For this purpose, we need do some a priori estimates on the local strong solutions. The following two lemmas give the energy estimates on the strong solutions, where the first one concerns the basic energy estimates, while the second one concerns the higher order estimates.

\begin{lemma}\label{lem2.1}
Let $(\rho, u, H)$ be a strong solution to system (\ref{1.1})--(\ref{1.6}) on $(0, T)$. Then, there holds  {
$$
(\|\sqrt\rho u\|_2^2+\|H\|_2^2)(t)+\int_0^t(\|\nabla u\|_2^2+\|\nabla H\|_2^2)ds\leq C(\|\sqrt{\rho_0}u_0\|_2^2+\|H_0\|_2^2)
$$}
for any $t\in(0, T)$.
\end{lemma}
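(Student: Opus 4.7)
The plan is to run the standard combined energy estimate for the velocity and the magnetic field simultaneously, relying on the fact that the Lorentz force and the induction source are energy-conservatively coupled.

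First, I would take the $L^2$ inner product of the momentum equation (\ref{1.2}) with $u$. Using $\textmd{div}\,u=0$ and $u|_{\partial\Omega}=0$, the pressure term drops, and combining with the continuity equation (\ref{1.1}) the convective term reorganizes into a time derivative, yielding
\begin{equation*}
\frac{1}{2}\frac{d}{dt}\|\sqrt{\rho}\,u\|_2^2+\mu\|\nabla u\|_2^2=\int_\Omega\bigl((\nabla\times H)\times H\bigr)\cdot u\,dx.
\end{equation*}
Next, I would take the $L^2$ inner product of the induction equation (\ref{1.3}) with $H$. Writing $-\Delta H=\nabla\times(\nabla\times H)$ (valid since $\textmd{div}\,H=0$) and integrating by parts, the boundary conditions $H\cdot n|_{\partial\Omega}=0$ and $(\nabla\times H)\times n|_{\partial\Omega}=0$ kill all boundary contributions, giving
\begin{equation*}
\frac{1}{2}\frac{d}{dt}\|H\|_2^2+\lambda\|\nabla\times H\|_2^2=\int_\Omega\nabla\times(u\times H)\cdot H\,dx.
\end{equation*}

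The key step is to cancel the two right-hand sides. By the cyclic identity $(A\times B)\cdot C=A\cdot(B\times C)$, the Lorentz work rewrites as $\int_\Omega(\nabla\times H)\cdot(H\times u)\,dx$. Integrating by parts in the induction source, $\int_\Omega\nabla\times(u\times H)\cdot H\,dx=\int_\Omega(u\times H)\cdot(\nabla\times H)\,dx+\int_{\partial\Omega}((u\times H)\times n)\cdot H\,dS$; the boundary integral vanishes because $u|_{\partial\Omega}=0$, and the remaining bulk term equals $-\int_\Omega(\nabla\times H)\cdot(H\times u)\,dx$, exactly canceling the Lorentz work. Adding the two identities I obtain
\begin{equation*}
\frac{1}{2}\frac{d}{dt}\bigl(\|\sqrt{\rho}\,u\|_2^2+\|H\|_2^2\bigr)+\mu\|\nabla u\|_2^2+\lambda\|\nabla\times H\|_2^2=0.
\end{equation*}

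Finally, I would invoke the standard elliptic estimate for divergence-free vector fields with tangential boundary condition on a bounded smooth domain, namely $\|\nabla H\|_2^2\leq C_\Omega\|\nabla\times H\|_2^2$ when $\textmd{div}\,H=0$ and $H\cdot n|_{\partial\Omega}=0$ (this is a Hodge-type estimate; note no $\|H\|_2^2$ correction is needed since Neumann/tangential harmonic fields are trivial on a simply connected domain, and in the general case a Poincaré-type argument absorbs $\|H\|_2$ because $H$ has zero flux across $\partial\Omega$). Integrating in time on $(0,t)$ then delivers the desired inequality with a constant $C$ depending only on $\mu$, $\lambda$, and $\Omega$.

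The only non-routine point is the cancellation step above; once the vector-identity manipulation is in place, the proof is a one-line Grönwall-free integration. The Hodge estimate relating $\|\nabla H\|_2$ to $\|\nabla\times H\|_2$ is a standard consequence of the boundary conditions (\ref{1.6}) and is the reason why boundary condition $(\nabla\times H)\times n|_{\partial\Omega}=0$ is imposed in (\ref{1.6}) rather than a Dirichlet condition.
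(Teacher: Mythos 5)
Your proposal is correct and follows essentially the same route as the paper: multiply (\ref{1.2}) by $u$ and (\ref{1.3}) by $H$, cancel the coupled terms (the paper does this by rewriting both as $\pm\int_\Omega(H\cdot\nabla)u\cdot H\,dx$, which is the same cancellation you perform in curl form), and obtain the energy identity with dissipation $\|\nabla u\|_2^2+\|\nabla\times H\|_2^2$. Your final Hodge-type step recovering $\|\nabla H\|_2$ from $\|\nabla\times H\|_2$ under $\mathrm{div}\,H=0$, $H\cdot n|_{\partial\Omega}=0$ is exactly what the paper invokes by citing von Wahl's div--curl estimate, so the two arguments coincide.
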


\begin{proof}
Multiply (\ref{1.2}) by $u$ and integrate over $\Omega$, by the aid of (\ref{1.1}), we obtain after integration by parts that
\begin{align}
\frac{d}{dt}\int_\Omega\frac{\rho}{2}|u|^2dx+\int_\Omega|\nabla u|^2dx=\int_\Omega(H\cdot\nabla H)\cdot udx=-\int_\Omega(H\cdot\nabla)u\cdot Hdx.\label{2.1}
\end{align}
  {Recall the boundary condition (\ref{1.6}) and the identity
$\Delta H=\nabla\text{div}H-\nabla\times(\nabla\times H)$, it follows from integrating by parts that
\begin{align*}
-\int_\Omega\Delta H\cdot H dx=&\int_\Omega[\nabla\times(\nabla\times H)-\nabla\text{div}H]\cdot Hdx=\int_\Omega\nabla\times(\nabla\times H)\cdot Hdx\\
=&\int_{\partial\Omega}(\nabla\times H\times H)\cdot ndS+\int_\Omega|\nabla\times H|^2dx\\
=&-\int_{\partial\Omega}n\times(\nabla\times H)\cdot Hds+\int_\Omega|\nabla\times H|^2dx=\int_\Omega|\nabla\times H|^2dx.
\end{align*}}
Multiply (\ref{1.3}) by $H$ and integrate over $\Omega$,   {then it follows from the above identity that}
\begin{equation}\label{2.2}
\frac{d}{dt}\int_\Omega\frac{|H|^2}{2}dx+  {\int_\Omega|\nabla\times  H|^2dx}=\int_\Omega(H\cdot\nabla)u\cdot Hdx.
\end{equation}
Summing (\ref{2.1}) with (\ref{2.2}) up, and integrating the resulting equation with respect to $t$, we obtain
\begin{equation*}
\int_\Omega(\rho|u|^2+|H|^2)dx+2\int_0^t\int_\Omega(|\nabla u|^2+  {|\nabla\times H|^2)dxds}=\int_\Omega(\rho_0|u_0|^2+|H_0|^2)dx,
\end{equation*}
  {
and then by using \cite{WAHL}, we have
\begin{equation*}(\|\sqrt\rho u\|_2^2+\|H\|_2^2)(t)+\int_0^t(\|\nabla u\|_2^2+\|\nabla H\|_2^2)ds\leq C(\|\sqrt{\rho_0}u_0\|_2^2+\|H_0\|_2^2),
\end{equation*}}
completing the proof.
\end{proof}

\begin{lemma}\label{lem2.2}
Let $(\rho, u, H)$ be a strong solution to system (\ref{1.1})--(\ref{1.6}) on $(0, T)$. Then, there holds
\begin{align*}
&\sup_{0\leq s\leq t}(\|\nabla u\|_2^2+\|\nabla H\|_2^2)+\int_0^t(\|\nabla^2u\|_2^2+\|\nabla^2 H\|_2^2)ds\\
\leq&2(\|\nabla u_0\|_2^2+\|\nabla H_0\|_2^2)+C\sup_{0\leq s\leq t}(C_0^2\|\nabla u\|_2^4+C_0\|\nabla H\|_2^3)\\
&+CC_0\sup_{0\leq s\leq t}(\|\nabla u\|_2+\|\nabla H\|_2)\int_0^t(\|\nabla^2u\|_2^2+\|\nabla^2H\|_2^2)ds
\end{align*}
for any $t\in(0, T)$.
\end{lemma}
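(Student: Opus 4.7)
The plan is to derive this $H^1$-level estimate by testing (\ref{1.2}) with $u_t$ and (\ref{1.3}) with $H_t$, and then coupling the resulting identities with Stokes/elliptic regularity. Multiplying (\ref{1.2}) by $u_t$, integrating over $\Omega$, and using $\text{div}\,u_t=0$ together with $u_t|_{\partial\Omega}=0$ to annihilate the pressure, I would obtain
\[
\frac{\mu}{2}\frac{d}{dt}\|\nabla u\|_2^2 + \|\sqrt\rho u_t\|_2^2 = -\int_\Omega\rho(u\cdot\nabla)u\cdot u_t\,dx + \int_\Omega (H\cdot\nabla)H\cdot u_t\,dx.
\]
Testing (\ref{1.3}) with $H_t$, and using $\Delta H=-\nabla\times(\nabla\times H)$ (since $\text{div}\,H=0$) with the same integration-by-parts argument as in Lemma \ref{lem2.1} to obtain $-\int\Delta H\cdot H_t\,dx=\frac{1}{2}\frac{d}{dt}\|\nabla\times H\|_2^2$, gives
\[
\frac{\lambda}{2}\frac{d}{dt}\|\nabla\times H\|_2^2 + \|H_t\|_2^2 = -\int_\Omega (u\cdot\nabla)H\cdot H_t\,dx + \int_\Omega (H\cdot\nabla)u\cdot H_t\,dx.
\]

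The central difficulty is the Lorentz term $\int_\Omega(H\cdot\nabla)H\cdot u_t\,dx$, because under possible vacuum $\|u_t\|_2$ cannot be bounded by $\|\sqrt\rho u_t\|_2$. I would resolve this via a time-derivative trick: an integration by parts in $x$ (using $\text{div}\,H=0$ and $H\cdot n|_{\partial\Omega}=0$) yields $\int(H\cdot\nabla)H\cdot u_t=-\int H\otimes H:\nabla u_t$, and then transferring the time derivative,
\[
-\int_\Omega H\otimes H:\nabla u_t\,dx = -\frac{d}{dt}\int_\Omega H\otimes H:\nabla u\,dx + 2\int_\Omega H_t\otimes H:\nabla u\,dx.
\]
The total derivative moves to the left-hand side, while the remaining $H_t$-term is bounded by $\|H_t\|_2\|H\|_3\|\nabla u\|_6\leq CC_0^{1/2}\|H_t\|_2\|\nabla H\|_2^{1/2}\|\nabla^2 u\|_2$ (using $\|H\|_3\leq C\|H\|_2^{1/2}\|H\|_6^{1/2}$ together with $\|H\|_2\leq CC_0$ from Lemma \ref{lem2.1}), so that Young's inequality absorbs $\|H_t\|_2^2$ and leaves a clean $CC_0\|\nabla H\|_2\|\nabla^2 u\|_2^2$ contribution of exactly the shape of the last term in the claim.

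Integrating the sum of the two identities from $0$ to $t$, the boundary-in-time evaluation of the transferred total derivative produces $\int H\otimes H:\nabla u(t)-\int H_0\otimes H_0:\nabla u_0$. By the Gagliardo--Nirenberg inequality $\|H\|_4\leq C\|H\|_2^{1/4}\|\nabla H\|_2^{3/4}$ and Lemma \ref{lem2.1}, each piece is majorized by $CC_0^{1/2}\|\nabla H\|_2^{3/2}\|\nabla u\|_2$; applying Young's inequality in the two regimes $(p,q)=(2,2)$ and $(p,q)=(4/3,4)$ and absorbing the $\varepsilon$-remainders into the leading $\sup(\|\nabla u\|_2^2+\|\nabla H\|_2^2)$ on the left --- together with the norm equivalence $\|\nabla H\|_2^2\leq C\|\nabla\times H\|_2^2$ --- produces exactly the two contributions $CC_0^2\|\nabla u\|_2^4$ and $CC_0\|\nabla H\|_2^3$ in the target estimate.

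Finally, to bring $\int_0^t(\|\nabla^2 u\|_2^2+\|\nabla^2 H\|_2^2)\,ds$ onto the left-hand side, I would invoke Stokes regularity applied to $-\mu\Delta u+\nabla p=-\rho u_t-\rho(u\cdot\nabla)u+(H\cdot\nabla)H$ (with $\text{div}\,u=0$, $u|_{\partial\Omega}=0$) and the analogous elliptic estimate for $H$ under the boundary conditions (\ref{1.6}). The right-hand sides of these elliptic bounds are then controlled via the uniform $C_0$-weighted interpolations
\[
\|\sqrt\rho u\cdot\nabla u\|_2^2\leq\|\sqrt\rho u\|_3^2\|\nabla u\|_6^2\leq CC_0\|\nabla u\|_2\|\nabla^2 u\|_2^2,\qquad \|H\cdot\nabla H\|_2^2\leq CC_0\|\nabla H\|_2\|\nabla^2 H\|_2^2,
\]
with the key input $\|\sqrt\rho u\|_3\leq C\|\sqrt\rho u\|_2^{1/2}\|\sqrt\rho u\|_6^{1/2}\leq CC_0^{1/2}\|\nabla u\|_2^{1/2}$, and analogously for the cross-terms $(u\cdot\nabla)H$ and $(H\cdot\nabla)u$. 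After time integration these all feed into the $CC_0\sup(\|\nabla u\|_2+\|\nabla H\|_2)\int_0^t(\|\nabla^2 u\|_2^2+\|\nabla^2 H\|_2^2)\,ds$ term of the claim. The hardest part of the whole argument is the bookkeeping of the time-derivative trick combined with the cascade of Young absorptions so that every one of $\|\sqrt\rho u_t\|_2^2$, $\|H_t\|_2^2$, $\|\nabla^2 u\|_2^2$, $\|\nabla^2 H\|_2^2$ and the $\varepsilon$-remainders of $\|\nabla u\|_2^2$, $\|\nabla H\|_2^2$ lands on the left with positive coefficient; the factor $2$ in front of the initial data on the right then emerges from these absorptions plus the above norm equivalence.
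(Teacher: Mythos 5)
Your strategy is in essence the paper's own: test (\ref{1.2}) with $u_t$ and (\ref{1.3}) with $H_t$, transfer the time derivative in the Lorentz term onto $\int_\Omega (H\cdot\nabla)u\cdot H\,dx$ to avoid any unweighted $\|u_t\|_2$, bring $\|\nabla^2u\|_2^2$ and $\|\nabla^2H\|_2^2$ in through the Stokes and elliptic $H^2$ estimates, and close with Sobolev interpolation, Young's inequality and Lemma \ref{lem2.1}. Most of your individual bounds coincide with the paper's up to equivalent reshuffling (e.g.\ your direct bound $CC_0^{1/2}\|H_t\|_2\|\nabla H\|_2^{1/2}\|\nabla^2u\|_2$ for the transferred $H_t$-term, and your treatment of the time-boundary term $\int_\Omega H\otimes H:\nabla u\,dx$, match the paper's $\|H\|_2\|\nabla H\|_2\|\nabla^2 u\|_2^2$ and $\varepsilon\|\nabla u\|_2^2+C\|H\|_2\|\nabla H\|_2^3$ contributions); your curl-based handling of $-\int_\Omega\Delta H\cdot H_t\,dx$ is if anything more careful than the paper's.

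There is, however, one concrete misstep: you assert that the cross-term $(u\cdot\nabla)H$ (in the elliptic estimate for $H$, and implicitly also the term $\int_\Omega(u\cdot\nabla)H\cdot H_t\,dx$ from the $H_t$ test, which you never address) can be treated ``analogously'' to $\sqrt\rho\,u\cdot\nabla u$ and $H\cdot\nabla H$, i.e.\ with a $C_0$ prefactor feeding into $CC_0\sup(\|\nabla u\|_2+\|\nabla H\|_2)\int_0^t(\|\nabla^2u\|_2^2+\|\nabla^2H\|_2^2)\,ds$. This is exactly where the vacuum bites: the interpolation $\|u\|_3\leq C\|u\|_2^{1/2}\|u\|_6^{1/2}$ is unavailable because Lemma \ref{lem2.1} controls only $\|\sqrt\rho u\|_2$, not $\|u\|_2$, and $(u\cdot\nabla)H$ carries no density weight. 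The correct treatment (the paper's) is $\|u\cdot\nabla H\|_2\leq\|u\|_6\|\nabla H\|_3\leq C\|\nabla u\|_2\|\nabla H\|_2^{1/2}\|\nabla^2H\|_2^{1/2}$, hence $\|u\cdot\nabla H\|_2^2\leq\varepsilon\|\nabla^2H\|_2^2+C\|\nabla u\|_2^4\|\nabla H\|_2^2$; after time integration one invokes the dissipation bound $\int_0^t\|\nabla H\|_2^2\,ds\leq CC_0^2$ from Lemma \ref{lem2.1}, so this contribution lands in the $CC_0^2\sup\|\nabla u\|_2^4$ term of the statement, not in the third term. With that correction your argument closes; note also that the exact factor $2$ in front of the initial data only comes out after the $\varepsilon$-absorptions (and, in your curl formulation, the Von Wahl norm equivalence) are tracked, though nothing downstream genuinely requires that constant to be exactly $2$.
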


\begin{proof}
Multiplying (\ref{1.1}) by $u_t$ and integration by parts yields
\allowdisplaybreaks\begin{align*}
&\frac{d}{dt}\int_\Omega\frac{|\nabla u|^2}{2}dx+\int_\Omega\rho|u_t|^2dx\\
=&\int_\Omega[(H\cdot\nabla)H\cdot u_t-\rho(u\cdot\nabla)u\cdot u_t]dx=-\int_\Omega[\rho(u\cdot\nabla)u\cdot u_t+(H\cdot\nabla)u_t\cdot H]dx\\
=&-\frac{d}{dt}\int_\Omega (H\cdot\nabla)u\cdot Hdx+\int_\Omega[(H_t\cdot\nabla)u\cdot H+(H\cdot\nabla)u\cdot H_t-\rho(u\cdot\nabla)u\cdot u_t]dx\\
\leq&-\frac{d}{dt}\int_\Omega (H\cdot\nabla)u\cdot Hdx+\int_\Omega\left(\frac{1}{2}\rho|u_t|^2+\frac{1}{4}|H_t|^2\right)dx+4\int_\Omega(|H|^2|\nabla u|^2+\rho|u|^2|\nabla u|^2)dx,
\end{align*}
and thus
\begin{align}
&\frac{d}{dt}\int_\Omega\left(\frac{|\nabla u|^2}{2}+(H\cdot\nabla)u\cdot H\right)dx+\int_\Omega\rho|u_t|^2dx\nonumber\\
\leq&\int_\Omega\left(\frac{1}{2}\rho|u_t|^2+\frac{1}{4}|H_t|^2\right)dx+4\int_\Omega(|H|^2|\nabla u|^2+\rho|u|^2|\nabla u|^2)dx. \label{2.3}
\end{align}
Multiplying (\ref{1.3}) by $H_t$ and integration by parts yields
\begin{align}
&\frac{d}{dt}\int_\Omega\frac{|\nabla H|^2}{2}dx+\int_\Omega|H_t|^2dx=\int_\Omega[(H\cdot\nabla)u-(u\cdot\nabla)H]H_tdx\nonumber\\
\leq&\frac{1}{4}\int_\Omega|H_t|^2dx+4\int_\Omega(|H|^2|\nabla u|^2+|u|^2|\nabla H|^2)dx.\label{2.4}
\end{align}
Summing (\ref{2.3}) with (\ref{2.4}) up, it follows
\begin{align*}
&\frac{d}{dt}\int_\Omega(|\nabla u|^2+|\nabla H|^2+2(H\cdot\nabla)u\cdot H)dx+\int_\Omega(\rho|u_t|^2+|H_t|^2)dx\\
\leq&  {16}\int_\Omega(|H|^2|\nabla u|^2+|u|^2|\nabla H|^2+\rho|u|^2|\nabla u|^2)dx,
\end{align*}
and thus
\begin{align}
&\sup_{0\leq s\leq t}(\|\nabla u\|_2^2+\|\nabla H\|_2^2)+\int_0^t(\|\sqrt\rho u_t\|_2^2+\|H_t\|_2^2)ds\nonumber\\
\leq&(\|\nabla u_0\|_2^2+\|\nabla H_0\|_2^2)+4\sup_{0\leq s\leq t}\int_\Omega|H|^2|\nabla u|dx\nonumber\\
&+10\int_0^t\int_\Omega(|H|^2|\nabla u|^2+|u|^2|\nabla H|^2+\rho|u|^2|\nabla u|^2)dxds.\label{2.5}
\end{align}
Applying $H^2$ estimates to Stokes equations and elliptic equations, it follows from (\ref{1.2}) and (\ref{1.3}) that
\begin{align}
\|\nabla^2u\|_2^2\leq& C(\|\rho u_t\|_2^2+\|\rho(u\cdot\nabla)u\|_2^2+\|(H\cdot\nabla)H\|_2^2)\nonumber\\
\leq&C(\|\sqrt\rho u_t\|_2^2+\|\sqrt\rho(u\cdot\nabla)u\|_2^2+\|(H\cdot\nabla)H\|_2^2)\label{2.6}
\end{align}
and
\begin{equation}
\|\nabla^2H\|_2^2\leq C(\|H_t\|_2^2+\|(u\cdot\nabla)H\|_2^2+\|(H\cdot\nabla)u\|_2^2),\label{2.7}
\end{equation}
where we have used the fact that $0\leq\rho\leq\bar\rho$,   {which follows from equation (\ref{1.1}) by the characteristic method.} Combining (\ref{2.5}) with (\ref{2.6}), together with (\ref{2.7}), we obtain
\begin{align}
&\sup_{0\leq s\leq t}(\|\nabla u\|_2^2+\|\nabla H\|_2^2)+\int_0^t(\|\sqrt\rho u_t\|_2^2+\|\nabla^2u\|_2^2+\|H_t\|_2^2+\|\nabla^2H\|_2^2)ds\nonumber\\
\leq&(\|\nabla u_0\|_2^2+\|\nabla H_0\|_2^2)+4\sup_{0\leq s\leq t}\int_\Omega|H|^2|\nabla u|dx\nonumber\\
&+C\int_0^t\int_\Omega[|H|^2(|\nabla u|^2+|\nabla H|^2)+|u|^2|\nabla H|^2+\rho|u|^2|\nabla u|^2]dxds.\label{2.8}
\end{align}
  {We have use the facts that}
\begin{align*}
&  {\|H\|_6\leq C\|\nabla H\|_2, \quad\|u\|_6\leq C\|\nabla u\|_2},\\
&  {\|\nabla u\|_6\leq C\|u\|_{H^2}\leq C\|\Delta u\|_2,\quad\|\nabla H\|_6\leq C\|H\|_{H^2}\leq C\|\Delta H\|_2,}
\end{align*}
  {where, the first line can be easily proved by contradiction arguments based on the well known compact embedding result $H^1\hookrightarrow\hookrightarrow L^2(\Omega)$, while the second line follows from the elliptic estimates}.
Using these inequalities, it follows from the H\"older inequality, Sobolev inequality and Young inequality that
\begin{align*}
\int_\Omega|H|^2|\nabla u|dx\leq&\|H\|_4^2\|\nabla u\|_2\leq\|H\|_2^{1/2}\|H\|_6^{3/2}\|\nabla u\|_2\\
\leq&C\|H\|_2^{1/2}\|\nabla H\|_2^{3/2}\|\nabla u\|_2\leq\varepsilon\|\nabla u\|_2^2+C\|H\|_2\|\nabla H\|_2^3,\\
\int_\Omega\rho|u|^2|\nabla u|^2dx\leq&C\|\sqrt\rho u\|_2\|u\|_6\|\nabla u\|_6^2\leq C\|\sqrt\rho u\|_2\|\nabla u \|_2\|\nabla^2u\|_2^2,\\
\int_\Omega|u|^2|\nabla H|^2dx\leq&\|u\|_6^2\|\nabla H\|_3^2\leq C\|\nabla u\|_2^2\|\nabla H\|_2\|\nabla H\|_6\\
\leq &C\|\nabla u\|_2^2\|\nabla H\|_2\|\nabla^2H\|_2\leq\varepsilon\|\nabla^2H\|_2^2+C\|\nabla u\|_2^4\|\nabla H\|_2^2,\\
\int_\Omega|H|^2(|\nabla u|^2+|\nabla H|^2)d  {x}\leq&\|H\|_2\|H\|_6(\|\nabla u\|_6^2+\|\nabla H\|_6^2)\\
\leq& C\|H\|_2\|\nabla H\|_2(\|\nabla^2u\|_2^2+\|\nabla^2H\|_2^2).
\end{align*}
Substituting the above inequalities into (\ref{2.8}), taking $\varepsilon$ small enough, it follows from Lemma \ref{lem2.1} that
\begin{align*}
&\sup_{0\leq s\leq t}(\|\nabla u\|_2^2+\|\nabla H\|_2^2)+\int_0^t(\|\sqrt\rho u_t\|_2^2+\|\nabla^2u\|_2^2+\|H_t\|_2^2+\|\nabla H\|_2^2)ds\\
\leq&2(\|\nabla u_0\|_2^2+\|\nabla H_0\|_2^2)+C\sup_{0\leq s\leq t}\|H\|_2\|\nabla H\|_2^3+C\sup_{0\leq s\leq t}(\|\sqrt\rho u\|_2\|\nabla u\|_2+\|H\|_2\|\nabla H\|_2)\\
&\times\int_0^t(\|\nabla^2u\|_2^2+\|\nabla^2H\|_2^2)ds+C\sup_{0\leq s\leq t}\|\nabla u\|_2^4\int_0^t\|\nabla H\|_2^2ds\\
\leq&2(\|\nabla u_0\|_2^2+\|\nabla H_0\|_2^2)+C\sup_{0\leq s\leq t}(C_0^2\|\nabla u\|_2^4+C_0\|\nabla H\|_2^3)\\
&+CC_0\sup_{0\leq s\leq t}(\|\nabla u\|_2+\|\nabla H\|_2)\int_0^t(\|\nabla^2u\|_2^2+\|\nabla^2H\|_2^2)ds,
\end{align*}
completing the proof.
\end{proof}

By using the two lemmas above, we can prove the following a priori estimates.

\begin{lemma}\label{lem2.3}
Let $(\rho, u, H)$ be a strong solution to system (\ref{1.1})--(\ref{1.6}) on $(0, T)$. Then, there exists a positive constant $\varepsilon_0$ depending only on $\bar\rho$ and $\Omega$, such that
$$
\sup_{0\leq t\leq T}(\|\nabla u\|_2^2+\|\nabla H\|_2^2)+\int_0^T(\|\nabla^2u\|_2^2+\|\nabla H\|_2^2)dt
\leq 4(\|\nabla u_0\|_2^2+\|\nabla H_0\|_2^2),
$$
provided
$$
C_0(\|\nabla u_0\|_2^2+\|\nabla H_0\|_2^2)\leq\varepsilon_0.
$$
\end{lemma}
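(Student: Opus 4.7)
The plan is to close the estimate from Lemma~\ref{lem2.2} by a standard continuity (bootstrap) argument. To streamline notation, set
\[
A(t):=\sup_{0\le s\le t}\bigl(\|\nabla u\|_2^2+\|\nabla H\|_2^2\bigr),\quad B(t):=\int_0^t\bigl(\|\nabla^2u\|_2^2+\|\nabla^2H\|_2^2\bigr)\,ds,\quad A_0:=\|\nabla u_0\|_2^2+\|\nabla H_0\|_2^2,
\]
so that Lemma~\ref{lem2.2} reads
\[
A(t)+B(t)\;\le\;2A_0+C\,C_0^2A(t)^2+C\,C_0\,A(t)^{3/2}+C\,C_0\,A(t)^{1/2}\,B(t).
\]
Thanks to the strong-solution regularity (in particular $u,H\in C([0,T];H^1)$ via $L^\infty(0,T;H^2)$ together with the time regularity of $u_t,H_t$), the map $t\mapsto A(t)+B(t)$ is continuous on $[0,T]$ with value $A_0$ at $t=0$, so the set $S:=\{t\in[0,T]:A(t)+B(t)\le 4A_0\}$ is a nonempty closed subinterval.

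The core of the argument is to show $S$ is open, which by connectedness of $[0,T]$ forces $S=[0,T]$. For $t\in S$ I have $A(t),B(t)\le 4A_0$; substituting these into the right-hand side yields
\[
A(t)+B(t)\;\le\;2A_0+16C\,C_0^2A_0^2+8C\,C_0\,A_0^{3/2}+2C\,C_0\,A_0^{1/2}\,B(t).
\]
Provided the smallness assumption forces $2C\,C_0\,A_0^{1/2}\le \tfrac12$, I absorb the last term on the left to obtain
\[
A(t)+\tfrac12B(t)\;\le\;2A_0+16C\,A_0\bigl(C_0^2A_0+C_0\,A_0^{1/2}\bigr).
\]
The decisive observation is the identity $(C_0 A_0^{1/2})^2=C_0^2A_0$: a single scale-critical smallness assumption on $C_0^2A_0$ (which is exactly the quantity appearing in the hypothesis of Theorem~\ref{theorem1.1}) simultaneously controls both $C_0^2A_0$ and $C_0 A_0^{1/2}$. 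Choosing $\varepsilon_0$ small enough so that $16C\bigl(C_0^2A_0+C_0 A_0^{1/2}\bigr)\le \tfrac12$ and the absorption condition both hold --- with $\varepsilon_0$ depending only on the universal constant $C$, and hence only on $\bar\rho$ and $\Omega$ through the Sobolev and elliptic-regularity constants --- the right-hand side is at most $\tfrac52A_0$, whence $A(t)+B(t)\le 3A_0<4A_0$, a strict improvement of the bootstrap hypothesis.

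This strict inequality together with continuity of $A+B$ makes $S$ open, hence $S=[0,T]$, proving the lemma. The main obstacle is not any individual estimate but rather the dimensional bookkeeping: one must verify that a single smallness parameter dominates all three ``bad'' terms $C_0^2A(t)^2$, $C_0A(t)^{3/2}$ and $C_0A(t)^{1/2}B(t)$ simultaneously, with no uncontrolled factor of $C_0$ surviving; the identity $(C_0 A_0^{1/2})^2=C_0^2A_0$ together with the bootstrap bound $A,B\le 4A_0$ is precisely what makes this accounting work.
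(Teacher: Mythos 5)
Your overall strategy---feeding the bound of Lemma~\ref{lem2.2} into a continuity/bootstrap argument driven by smallness of the scale-critical quantity $C_0^2A_0$---is exactly the paper's (the paper packages it as $E(t)\le 2A_0+C_*[\Phi(t)^{1/2}+\Phi(t)]E(t)$ with $\Phi(t)=C_0^2\sup_{0\le s\le t}(\|\nabla u\|_2^2+\|\nabla H\|_2^2)$ and argues via a first time at which $\Phi$ reaches a fixed threshold, rather than via your open/closed set, but these are the same continuity argument). However, the step that is supposed to close your bootstrap fails as written: from $A(t)+\tfrac12B(t)\le\tfrac52A_0$ you cannot conclude $A(t)+B(t)\le 3A_0$. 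All it yields is $A(t)+B(t)=\bigl(A(t)+\tfrac12B(t)\bigr)+\tfrac12B(t)\le 5A_0$ (or $\le\tfrac92A_0$ if you reuse $B\le 4A_0$), which is not a strict improvement of the bootstrap hypothesis $A+B\le 4A_0$, so openness of $S$ is not established. The price of absorbing half of $B$ into the left-hand side is exactly a factor $2$ on $B$ when you undo the absorption, and that eats the margin you need.

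The gap is quantitative rather than conceptual, and a one-line change repairs it: either do not absorb at all and instead estimate the last term on $S$ by $C\,C_0A(t)^{1/2}B(t)\le 2C\,C_0A_0^{1/2}\cdot 4A_0=8C\,C_0A_0^{3/2}$, which gives $A+B\le 2A_0+16C\,C_0^2A_0^2+16C\,C_0A_0^{3/2}\le 3A_0<4A_0$ once $16C\bigl(C_0^2A_0+C_0A_0^{1/2}\bigr)\le 1$; or absorb only a small fraction, e.g. demand $2C\,C_0A_0^{1/2}\le\tfrac18$, so that $A+\tfrac78B\le\tfrac52A_0$ and hence $A+B\le\tfrac87\bigl(A+\tfrac78B\bigr)\le\tfrac{20}{7}A_0<4A_0$. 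With either correction your proof is complete and coincides in substance with the paper's; note also that, exactly like the paper's own proof, you actually use smallness of $C_0^2A_0$ (consistent with the hypothesis of Theorem~\ref{theorem1.1}) rather than of $C_0A_0$ as literally written in the statement of the lemma.
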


\begin{proof}
Define functions $E(t)$ and $\Phi(t)$ as follows
\begin{eqnarray*}
&E(t)=\displaystyle\sup_{0\leq s\leq t}(\|\nabla u\|_2^2+\|\nabla H\|_2^2)+\int_0^t(\|\nabla^2u\|_2^2+\|\nabla^2H\|_2^2)ds,\\
&\Phi(t)=C_0^2\displaystyle\sup_{0\leq s\leq t}(\|\nabla u\|_2^2+\|\nabla H\|_2^2).
\end{eqnarray*}
In view of the regularities of $u$ and $H$, one can easily check that both $E(t)$ and $\Phi(t)$ are continuous functions on $[0, T]$. By Lemma \ref{lem2.2}, there is a positive constant $C_*$, such that
\begin{equation}\label{2.9}
E(t)\leq 2(\|\nabla u_0\|_2^2+\|\nabla H_0\|_2^2)+C_*[\Phi(t)^{1/2}+\Phi(t)]E(t).
\end{equation}

We take
$$
\varepsilon_0=\min\left\{\frac{1}{32C_*}, \frac{1}{128C_*^2}\right\},
$$
and suppose that
$$
C_0^2(\|\nabla u_0\|_2^2+\|\nabla H_0\|_2^2)\leq\varepsilon_0.
$$

We claim that
$$
\Phi(t)<\min\left\{\frac{1}{4C_*}, \frac{1}{16C_*^2}\right\}, \qquad0\leq t\leq T.
$$
Otherwise, by the continuity and monotonicity of $\Phi(t)$, there is $T_0\in(0, T]$, such that
\begin{equation}\label{2.10}
\Phi(T_0)=\min\left\{\frac{1}{4C_*}, \frac{1}{16C_*^2}\right\}.
\end{equation}
On account of (\ref{2.10}), it follows from (\ref{2.9}) that
$$
E(T_0)\leq 2(\|\nabla u_0\|_2^2+\|\nabla H_0\|_2^2)+\frac{1}{2}E(T_0),
$$
and thus
$$
E(T_0)\leq 4(\|\nabla u_0\|_2^2+\|\nabla H_0\|_2^2).
$$
Recalling the definition of $E(t)$ and $\Phi(t)$, we deduce from the above inequality that
\begin{align*}
\Phi(T_0)\leq&C_0^2E(T_0)\leq 4C_0^2(\|\nabla u_0\|_2^2+\|\nabla H_0\|_2^2)\leq4\varepsilon_0=\min\left\{\frac{1}{8C_*}, \frac{1}{32C_*^2}\right\},
\end{align*}
contradicting to (\ref{2.10}). This contradiction implies that the claim is true.

By the aid of the claim we proved in the above, it follows from (\ref{2.9}) that
$$
E(t)\leq 4(\|\nabla u_0\|_2^2+\|\nabla H_0\|_2^2),\qquad 0< t< T,
$$
completing the proof.
\end{proof}

Now, we can give the proof of Theorem \ref{theorem1.1}.

\textbf{Proof of Theorem \ref{theorem1.1}: }Let $\varepsilon_0$ be the constant stated in Lemma \ref{lem2.3} and suppose that the initial data satisfies
$$
(\|\sqrt{\rho_0}u_0\|_2^2+\|H_0\|_2^2)(\|\nabla u_0\|_2^2+\|\nabla H_0\|_2^2)\leq\varepsilon_0.
$$
By Lemma \ref{lem2.0}, there is a unique strong solution $(\rho, u, H)$ to system (\ref{1.1})--(\ref{1.6}). Extend such local solution to the maximal existence time interval $[0, T_*)$. We will prove that $T_*=\infty$. Suppose, by contradiction, that $T_*<\infty$. By Lemma \ref{lem2.0}, the time $T_*$ can be characterized as follows
$$
\int_0^T\|(\nabla u, \nabla H)\|_2^8dt<\infty,\quad\mbox{for all}\quad 0<T<T_*,
$$
and
\begin{equation}\label{2.12}
\int_0^{T_*}\|(\nabla u, \nabla H)\|_2^8dt=\infty.
\end{equation}
By Lemma \ref{lem2.3}, for any $0<T<T_*$, there holds
$$
\sup_{0\leq t\leq T}(\|\nabla u\|_2^2+\|\nabla H\|_2^2)\leq 4(\|\nabla u_0\|_2^2+\|\nabla H_0\|_2^2),
$$
and thus
$$
\sup_{0\leq t< T_*}(\|\nabla u\|_2^2+\|\nabla H\|_2^2)\leq 4(\|\nabla u_0\|_2^2+\|\nabla H_0\|_2^2),
$$
which implies
$$
\int_0^{T_*}\|(\nabla u, \nabla H)\|_2^8dt\leq   {4^{4}(\|\nabla u_0\|_2^2+\|\nabla H_0\|_2^2)^4T_*}<\infty,
$$
contradicting to (\ref{2.12}). This contradiction provides us that $T_*=\infty$, and thus we obtain a global strong solution. The proof is complete.\\

  {\textbf{Acknowledgement.} The first author is partially supported by CUSF WK0010000028. The authors would also like to thank the referee for his/her comments and suggestions.}

\end{document}